\newtheorem{thm}{Theorem}[section]
\newcommand{\R}{{\rm I}\kern-0.18em{\rm R}}
\newcommand{\1}{{\rm 1}\kern-0.25em{\rm I}}
\newcommand{\E}{{\rm I}\kern-0.18em{\rm E}}
\newcommand{\p}{{\rm I}\kern-0.18em{\rm P}}
\def\@fnsymbol#1{\ensuremath{\ifcase#1\or a\or b\or c\or d\or \e\or f\or *\dagger 	\or \ddagger\ddagger \else\@ctrerr\fi}}
\title{Linear Statistics with Random Coefficients and Characterization of Hyperbolic Secant Distribution}
\author{Lev B. Klebanov\footnote{Department of Probability and Mathematical Statistics, Charles University, Prague, Czech Republic. e-mail: lev.klebanov@mff.cuni.cz}}
\date{}
\begin{document}
\maketitle

\begin{abstract}
There is given a characterization of hyperbolic secant distribution by the independence of linear forms with random coefficients. Also we provide a characterization by the identically distribution property.
\vspace{0.3cm}
\noindent
{\bf Key words:} hyperbolic secant distribution; characterization of distributions; linear forms; random coefficients.
\end{abstract}

\section{Introduction}\label{sec1}
\setcounter{equation}{0}

Many different characterizations of the normal distribution are known (see, for example \cite{KLR}). Some of them looks very natural and suitable for applications, especially for motivation of a distributions family choice. Let us mention four essential of them (from my point of view).  

\begin{enumerate}
\item[1N.]Normal distribution is only limit law for the sums of $n$ independent identically distributed (i.i.d.) random variables with finite variance. This is a consequence of well-known Central Limit Theorem. 
\item[2N.]Normal distribution is only one for which $X_1$ is identically distributed with $(X_1+X_2)/\sqrt{2}$, where $X_1, X_2$ are i.i.d. random variables. This statement is known as D. Polya theorem. Of course, there exist many generalizations of this theorem.
\item[3N.]Normal distribution is only one for which linear forms $X_1+X_2$ and $X_1-X_2$ are independent, where $X_1, X_2$ are i.i.d. random variables. The statement is known as S.N. Bernstein theorem. Its generalization is well-known Skitovich-Darmois theorem.
\item[4N.]Normal distribution is only one for which sample variance has a constant regression on sample mean. 
\end{enumerate}

One interesting (but not characteristic) property is that the density of normal distribution with zero mean has the same multiplicative type as its characteristic function up to normalizing constant.

Here I like to mention previous and new result connected to some similar properties of the hyperbolic secant distribution. These facts may be modified for characterizations of generalized hyperbolic secant distribution, however, I will not concentrate myself on any generalizations of such kind. They will be a subject of a separate publication.

\section{Some known characterizations of the hyperbolic secant distribution}\label{sec2}
\setcounter{equation}{0}

Let us mention some known results on characterization of the hyperbolic secant distribution. For simplicity, I consider the symmetric case only.

Firstly, for "the standard" case\footnote{In general case the density contains a scale parameter. We use term hyperbolic secant distribution for this general case.}, the density of hyperbolic secant distribution has the following form
\begin{equation}\label{eq1}
p(x)= \frac{1}{\pi}\cdot \frac{1}{\cosh(x)}.
\end{equation}
Corresponding characteristic function is
\begin{equation}\label{eq2}
f(t) = \frac{1}{\cosh(\pi t/2)}.
\end{equation}
Now we see, that the density of hyperbolic secant distribution has the same multiplicative type as its characteristic function up to normalizing constant. This is similar to corresponding property of normal distribution.

Let us mention other properties.
\begin{enumerate}
\item[1HS.]Hyperbolic secant distribution is only limit law for the sums of a special random number of i.i.d. random variables with finite variance. More precisely, let $X_1, \ldots ,X_n ,\ldots$ be a sequence of i.i.d. random variables with  finite variance and zero first moment. Suppose that $\nu_n,\; n\in \mathbb{N}$ is positive integer--valued random variate with probability generating function 
\begin{equation}\label{eq3}
\mathcal{P}_n(z)= \frac{1}{T_n(1/z)}.
\end{equation}
Then the sums $(1/\sqrt{n})\sum_{j=1}^{\nu_n}X_j$ converge in distribution to the hyperbolic secant distribution as $n \to \infty$. This fact was obtained in \cite{KKRT}.

\item[4HS.]There is a quadratic form of two (or more) i.i.d. random variables such that its regression on the sample mean is constant. This fact was firstly obtained in \cite{LL} (see also \cite{P} for more general facts).  
\end{enumerate}

Now we see that the properties 1HS and 4HS are very similar to that of normal law 1N and 4N. Below there are given analogs of 2N and 3N.

\section{Characterizations of hyperbolic secand distribution by the identical distribution and the independence properties}\label{sec3}
\setcounter{equation}{0}

Let us start with equally distribution property.

\begin{thm}\label{th1}
Let $X_1,X_2,X_3$ be i.i.d. random variables having a symmetric distribution. Suppose that $\varepsilon$ is Bernoulli random variable taking the values $1$ and $0$ with equal probabilities.
Suppose also that $\varepsilon$ is independent with $X_1,X_2,X_3$. The relation
\begin{equation}\label{eq4}
X_1\stackrel{d}{=} \frac{X_1+X_2}{2}+\varepsilon X_3 
\end{equation}
holds if and only if $X_1$ has hyperbolic secant distribution.
\end{thm}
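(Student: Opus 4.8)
The plan is to pass to characteristic functions and reduce \eqref{eq4} to a single functional equation, which I recognize as a disguised hyperbolic–cosine doubling identity. Write $f$ for the common characteristic function of $X_1,X_2,X_3$; since the distribution is symmetric, $f$ is real-valued and even, with $f(0)=1$ and $|f|\le 1$. Because $\varepsilon$ is independent of $(X_1,X_2,X_3)$ and $\tfrac{X_1+X_2}{2}$ is independent of $(\varepsilon,X_3)$, the characteristic function of the right-hand side of \eqref{eq4} factors as
\[
\E\,e^{it(\frac{X_1+X_2}{2}+\varepsilon X_3)}
= f(t/2)^2\cdot\Bigl(\tfrac12+\tfrac12 f(t)\Bigr).
\]
Hence \eqref{eq4} is equivalent to
\[
f(t)=\tfrac12 f(t/2)^2\bigl(1+f(t)\bigr),\qquad t\in\R .
\]
Solving for $f(t)$ gives the recursion $f(t)=f(t/2)^2/\bigl(2-f(t/2)^2\bigr)$; equivalently, setting $g:=1/f$,
\[
g(t)=2\,g(t/2)^2-1 .
\]

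Next I would show that $f$ has no real zeros, so that $g$ is well defined. Since $|f|\le1$ the denominator $2-f(t/2)^2$ stays $\ge1$, so no singularity arises. If $t_0>0$ were the smallest positive zero of the continuous function $f$, the recursion would force $f(t_0/2)=0$ with $t_0/2<t_0$, a contradiction; by evenness $f>0$ everywhere. Thus $g$ is continuous, $g\ge1$, $g(0)=1$, and I may write $g=\cosh\circ h$ with $h\ge0$ continuous and $h(0)=0$. The identity $\cosh(2u)=2\cosh^2u-1$ turns the recursion into
\[
h(t)=2\,h(t/2),\qquad t\ge0 .
\]
Once one knows $h(t)=a\,t$ for some $a>0$, it follows that $f(t)=1/\cosh(at)$, which is exactly the hyperbolic secant characteristic function \eqref{eq2} up to the scale parameter. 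Conversely, a direct computation (writing $c=\cosh(at/2)$ and $\cosh(at)=2c^2-1$, both sides of the functional equation equal $1/(2c^2-1)$) confirms that every hyperbolic secant law satisfies \eqref{eq4}, so both implications follow.

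The main obstacle is the passage from $h(t)=2h(t/2)$ to $h(t)=at$. Writing $\psi(t):=h(t)/t$ for $t>0$, the functional equation becomes $\psi(t)=\psi(t/2)$, hence $\psi(t)=\psi(t/2^{n})$ for all $n$. Continuity alone does not force $\psi$ to be constant: any continuous function on a fundamental domain, extended by the relation $\psi(t)=\psi(t/2)$, is a solution (these are precisely the functions $1$-periodic in $\log_2 t$), so genuinely oscillatory continuous solutions exist. They must be excluded using that $f=\operatorname{sech}\circ h$ is a bona fide characteristic function, not merely a continuous positive-definite-looking expression. The cleanest route I would pursue is to show that the germ of $f$ at the origin forces $\psi(t)=h(t)/t$ to converge as $t\to0^{+}$ (equivalently, that $h$ is differentiable at $0$); then $\psi(t)=\psi(t/2^{n})\to\psi(0^{+})$ shows that $\psi$ equals a constant $a$, giving $f(t)=\operatorname{sech}(at)$. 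Pinning down this germ — for instance by deducing quadratic behaviour $f(t)=1-\tfrac{\sigma^{2}}{2}t^{2}+o(t^{2})$ near $0$ from positive-definiteness, or by exploiting the self-similarity of the germ under $t\mapsto t/2$ — is the crux of the argument, and is where I expect the real work to lie.
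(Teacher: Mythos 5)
Your reduction coincides, up to notation, with the first half of the paper's own proof: equation (\ref{eq4}) becomes $f(t)=f^{2}(t/2)\bigl(1+f(t)\bigr)/2$, which is exactly the paper's (\ref{eq5}); your smallest-positive-zero argument is a variant of the paper's iteration $f(t_{0})=0\Rightarrow f(t_{0}/2^{k})=0$; and your verification that $1/\cosh(at)$ solves the equation settles the ``if'' direction. The substitution $g=1/f=\cosh\circ h$, turning the equation into $h(t)=2h(t/2)$, is a clean normalization that the paper does not make. But at precisely this point the paper finishes by citing the method of intensively monotone operators (\cite{KKM}, Theorem 1.1.1, together with strong $\mathcal{E}$-positivity of the family $\{1/\cosh(\pi a t/2),\,a>0\}$ from Example 1.3.1 there), whereas you stop at the crux: you correctly observe that continuity admits log-periodic solutions of $h(t)=2h(t/2)$ and that these must be killed using that $f$ is a characteristic function, but you do not do it. Worse, the one concrete route you sketch does not work as stated: positive definiteness alone does \emph{not} give a quadratic germ $f(t)=1-\tfrac{\sigma^{2}}{2}t^{2}+o(t^{2})$ --- symmetric laws of infinite variance (Cauchy, symmetric stable) are counterexamples --- and a second-order expansion at $0$ is equivalent to finite variance, which you never establish. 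So the ``only if'' direction is genuinely incomplete.

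The gap is, however, fillable inside your framework, and the missing observation is cheap: boundedness of your log-periodic function already forces finite variance. Since $\psi(t)=h(t)/t$ satisfies $\psi(t)=\psi(2t)$, its values on $(0,T]$ are among its values on $[T/2,T]$, so $\psi\le C:=\max_{[T/2,T]}\psi<\infty$ and hence $h(t)\le Ct$. Using $\cosh u\le e^{u^{2}/2}$,
\[
1-f(t)\;=\;1-1/\cosh\bigl(h(t)\bigr)\;\le\;1-e^{-h(t)^{2}/2}\;\le\;C^{2}t^{2}/2 ,
\]
and Fatou's lemma applied to $(1-f(t))/t^{2}=\int\bigl(1-\cos(tx)\bigr)t^{-2}\,d\mu(x)$ as $t\to0$ (where $\mu$ is the law of $X_{1}$) gives $\E X_{1}^{2}\le C^{2}<\infty$. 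Finite variance now yields $f(t)=1-\tfrac{\sigma^{2}}{2}t^{2}+o(t^{2})$, hence $\psi(t)\to\sigma$ as $t\to0^{+}$, and $\psi(t)=\psi(t/2^{n})\to\sigma$ forces $\psi\equiv\sigma$, i.e.\ $f(t)=1/\cosh(\sigma t)$. (One must still note the case $\sigma=0$: the point mass at $0$, $f\equiv1$, does satisfy (\ref{eq4}); this degenerate case is tacitly excluded in the theorem's formulation, an issue the paper's version shares.) With this addition your argument becomes a fully elementary, self-contained uniqueness proof, arguably preferable to the paper's appeal to the external machinery of \cite{KKM}; as submitted, though, it is a correct reduction plus an accurate diagnosis of the difficulty, not a proof.
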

\begin{proof}
The relation (\ref{eq4}) may be written in term of characteristic function $f(t)$ of random variable $X_1$ as
\begin{equation}\label{eq5}
f(t)=f^2(t/2)\frac{f(t)+1}{2}.
\end{equation} 
It is easy to see that if $f(t)$ satisfies (\ref{eq5}) then $f(at)$ is also a solution of (\ref{eq5}) for any positive $a$. Substituting (\ref{eq2}) into (\ref{eq5}) shows that the characteristic function of hyperbolic secant distribution is a solution of equation (\ref{eq5}).

Now we have to show that there are no other solutions of this equation. For that aim we use the method of intensively monotone operators (see \cite{KKM}). First of all, let us mention that if symmetric characteristic function $f$ is a solution of (\ref{eq5}) then it has no zeros on the real line. Really, if $f(t_o)=0$ then (\ref{eq5}) shows $f(t_o/2)=0$ and, therefore $f(t_o/2^k)=0$ for $k=1,2,\ldots$. in view of continuity of $f$ we must have $f(0)=0$ in contrary with $f(0)=1$.  

Define now the following operator
\[ (\mathbf{A}f)(t)=f^2(t/2)\frac{f(t)+1}{2}. \]
Let $\mathcal{E}$ be a set of non-negative functions, which are continuous in $[0,T]$ ($T>0$ is a fixed number). It is easy to see that the operator $\mathbf{A}$ is intensively monotone from $\mathcal{E}$ to the space of continuous functions on $[0,T]$. We use terminology from \cite{KKM}.

Consider the family $\{ \varphi(a t),\; a>0\}$, where $\varphi (t) = 1/\cosh(\pi t/2)$. According to Example 1.3.1 from \cite{KKM} this family is strongly $\mathcal{E}$-positive.

The statement of Theorem \ref{th1} follows now from \cite{KKM} Theorem 1.1.1.
\end{proof}

Let us note that the property given by Theorem \ref{th1} is similar to 2N, that is to D. Polya theorem.

Let us now provide a characterization of hyperbolic secant distribution by the property of independence of linear statistics with random coefficients.

\begin{thm}\label{th2}
Let $X_1,X_2,X_3$ be i.i.d. random variables having a symmetric distribution. Suppose that $\varepsilon$ is Bernoulli random variable taking the values $1$ and $0$ with equal probabilities.
Suppose also that $\varepsilon$ is independent with $X_1,X_2,X_3$. Consider two liner forms 
\begin{equation}\label{eq6}
L_1 = (X_1+X_2)/2+\varepsilon X_3; \quad L_2 = (X_1-X_2)/2+(1-\varepsilon) X_3.
\end{equation}
The forms (\ref{eq6}) are stochastic independent if and only if $X_1$ has hyperbolic secant distribution. 
\end{thm}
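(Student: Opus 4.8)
The plan is to translate the independence of $L_1$ and $L_2$ into a functional equation for the characteristic function $f$ of $X_1$, and then to reduce that equation to (\ref{eq5}) so that Theorem \ref{th1} applies. Since the common distribution is symmetric, $f$ is real and even. Conditioning on the two values of $\varepsilon$ and using the independence of $X_1,X_2,X_3,\varepsilon$, I would first compute the joint characteristic function. Because $sL_1+tL_2=\tfrac{s+t}{2}X_1+\tfrac{s-t}{2}X_2+\bigl(s\varepsilon+t(1-\varepsilon)\bigr)X_3$, splitting on $\varepsilon=1$ and $\varepsilon=0$ yields
\[
\E\, e^{i(sL_1+tL_2)} = f\!\left(\frac{s+t}{2}\right) f\!\left(\frac{s-t}{2}\right)\frac{f(s)+f(t)}{2}.
\]
The same conditioning shows that each marginal is exactly the operator of Theorem \ref{th1}: one finds $\E\, e^{isL_1}=(\mathbf{A}f)(s)$ and $\E\, e^{itL_2}=(\mathbf{A}f)(t)$, where $(\mathbf{A}f)(t)=f^2(t/2)\tfrac{f(t)+1}{2}$. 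Hence independence of $L_1$ and $L_2$ is equivalent to the identity
\[
f\!\left(\frac{s+t}{2}\right) f\!\left(\frac{s-t}{2}\right)\frac{f(s)+f(t)}{2} = (\mathbf{A}f)(s)\,(\mathbf{A}f)(t),\qquad s,t\in\R.
\]

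For the sufficiency (``if'') direction I would substitute $f(t)=1/\cosh(\pi t/2)$ and verify this identity directly. Writing $g(u)=\cosh(\pi u/2)$, it reduces to $g(s)+g(t)=2\,g\!\left(\tfrac{s+t}{2}\right)g\!\left(\tfrac{s-t}{2}\right)$, which is precisely the product-to-sum formula $\cosh A\,\cosh B=\tfrac12[\cosh(A+B)+\cosh(A-B)]$ applied with $A=\tfrac{\pi(s+t)}{4}$ and $B=\tfrac{\pi(s-t)}{4}$; indeed $A+B=\pi s/2$ and $A-B=\pi t/2$. So the hyperbolic secant characteristic function satisfies the functional equation, and the forms are independent.

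For the necessity (``only if'') direction the key idea is to set $t=s$. Then $f\!\left(\tfrac{s-t}{2}\right)$ becomes $f(0)=1$ and the equation collapses to $f(s)^2=(\mathbf{A}f)(s)^2$ for every $s$. I would next argue that $f$ has no real zeros, by the same reasoning as in Theorem \ref{th1}: if $t_0$ were the smallest positive zero, then $(\mathbf{A}f)(t_0)^2=f(t_0)^2=0$, and since $(\mathbf{A}f)(t_0)=f^2(t_0/2)\tfrac{f(t_0)+1}{2}=\tfrac12 f^2(t_0/2)$, this forces $f(t_0/2)=0$, contradicting the minimality of $t_0$. As $f(0)=1$ and $f$ is continuous and even, this gives $f>0$ everywhere, and consequently $(\mathbf{A}f)>0$ as well. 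Taking positive square roots in $f^2=(\mathbf{A}f)^2$ then yields exactly equation (\ref{eq5}), so Theorem \ref{th1} identifies $f$ as the characteristic function of the hyperbolic secant distribution.

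The step I expect to be most delicate is this last passage from $f^2=(\mathbf{A}f)^2$ to the \emph{signed} identity $f=\mathbf{A}f$: a priori one obtains only $|f|=|\mathbf{A}f|$, and without the no-zeros argument the two continuous functions could in principle disagree in sign on part of the line. Establishing the strict positivity of $f$ is therefore the real content of the necessity direction; once it is secured, the whole theorem reduces cleanly to Theorem \ref{th1}, and no further appeal to the method of intensively monotone operators is required.
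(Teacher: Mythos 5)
Your proof is correct and follows essentially the same route as the paper: compute the joint characteristic function, identify each marginal as $(\mathbf{A}f)$, substitute $t=s$ to get $f^2=(\mathbf{A}f)^2$, rule out real zeros of $f$, take positive square roots to recover equation (\ref{eq5}), and invoke Theorem \ref{th1}. You are in fact slightly more explicit than the paper at the two points it leaves terse --- the sign issue when passing from $f^2=(\mathbf{A}f)^2$ to $f=\mathbf{A}f$, and the $\cosh$ product-to-sum verification of the sufficiency direction --- but these are refinements of the same argument, not a different one.
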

\begin{proof}
Calculate mutual characteristic function of the forms (\ref{eq6}). We have
\[ \E \exp\{isL_1+itL_2\}=\E \exp\{i\Bigl(\frac{s+t}{2}X_1+\frac{s-t}{2}X_2 +\bigl(\varepsilon s+ (1-\varepsilon)t\bigr)X_3\Bigr)\}=
\]
\begin{equation}\label{eq7}
=f((s+t)/2)f((s-t)/2)\frac{f(s)+f(t)}{2},
\end{equation}
where $f$ is characteristic function of $X_1$. The forms $L_1$ and $L_2$ are independent if and only if
\[ \E \exp\{isL_1+itL_2\}=\E \exp\{isL_1\}\E \exp\{itL_2\}, \]
which is equivalent to
\begin{equation}\label{eq8}
f((s+t)/2)f((s-t)/2)\frac{f(s)+f(t)}{2}=h_1(s)h_2(t).
\end{equation}
Because $X_1$ has a symmetric distribution we have
\begin{equation}\label{eq9}
h_1(s)=h_2(s)=f^2(s/2)\frac{f(s)+1}{2}.
\end{equation}
This expression can be obtained by direct calculations or by substituting $t=0$ (or $s=0$) into (\ref{eq8}). However, substituting $t=s$ into (\ref{eq8}) we find
\begin{equation}\label{eq10}
f^2(s)=\Bigl(f^2(s/2)\frac{f(s)+1}{2}\Bigr)^2.
\end{equation}
Similarly to the proof of Theorem \ref{th1} we see that $f$ has no zeros on real line. Therefore,
equation (\ref{eq10}) is equivalent to (\ref{eq2}). Theorem \ref{th1} shows now that the condition of independence of the forms (\ref{eq3}) implies hyperbolic secant distribution of $X_1$.

Direct calculations lead to the fact that (\ref{eq7}) holds for characteristic function of hyperbolic secant distribution.
\end{proof}

It is clear that Theorem \ref{th2} is similar to Theorem by S.N. Bernstein, mentioned above as 3N. 

\section*{\small{ACKNOWLEDGEMENT}}

The work was partially supported by Grant GA\v{C}R 19-04412S.


\begin{thebibliography}{99}

\bibitem{KLR}
A.M. Kagan, Yu.V. Linnik and C.R. Rao (1973)
\newblock {\em Characterization problems in mathematical statistics}.
\newblock John Wiley \& Sons, New York, London, Sydney, Toronto. 

\bibitem{KKRT}
L.B. Klebanov, A.V. Kakosyan, S.T. Rachev and G.Temnov (2012)
\newblock On a Class of Distributions Stable Under Random Summation,
\newblock {\em J. Appl. Prob.} {\bf 49}, 303-318.

\bibitem{LL}
R.G. Laha and E. Lukacs (1960). 
\newblock On a Problem Connected With Quadratic Regression.
\newblock {\em Biometrika}, {\bf 47}, 335-343.

\bibitem{P}
J. Pusz (1995)
\newblock A Regression Characterisation of the
Meixner Hypergeometric Distribution,
\newblock {\em Austral. J. Statist.} 37(1), 83-87.

\bibitem{KKM}
A.V. Kakosyan, L.B. Klebanov and I.A. Melamed (1984)
\newblock {\em Characterization of Distributions by the Method of Intensively Monotone Operators},
\newblock Lecture Notes in Mathematics, 1088, Springer-Verlag, Berlin, Heidelberg, New York, Toronto. 

\end{thebibliography}
\end{document}